\numberwithin{equation}{section}
\definecolor{brown}{cmyk}{0, 0.72, 1, 0.45}
\definecolor{grey}{gray}{0.5}
\renewcommand{\epsilon}{\varepsilon}
\newcounter{rot}
\def\a{\alpha}  \def\d{\delta} \def\D{\Delta}
  \def\k{\kappa}
\def\t{\tau}
\newtheorem{maintheorem}{Theorem}
\newtheorem*{conjecture*}{Conjecture}
\newtheorem{theorem}{Theorem}[section]
\newtheorem*{theorem*}{Theorem}
\newtheorem{lemma}[theorem]{Lemma}
\newtheorem{fact}[theorem]{Fact}
\newtheorem{definition}[theorem]{Definition}
\newcommand{\bfrac}[2]{\left(\frac{#1}{#2}\right)}
\newcommand{\set}[1]{\left\{#1\right\}}
\def\E{\mathbb{E}}
\def\P{\mathbb{P}}
\def\Pr{\mathbb{P}}
\newcommand{\scr}{\mathcal}
\newcommand{\eps}{\varepsilon}
\newcommand{\ceil}[1]{\left\lceil #1 \right\rceil}
\newcommand{\of}[1]{\left( #1 \right) }
\newcommand{\abs}[1]{\left| #1 \right|}
\newcommand{\sqbracs}[1]{\left[ #1 \right]}
\newcommand{\sqbs}[1]{\left[ #1 \right]}
\newcommand{\braces}[1]{\left\{ #1 \right\}}
\newcommand{\Mean}[1]{\E\sqbs{#1}}
\newcommand{\tbf}[1]{\textbf{#1}}
\renewcommand{\Pr}[1]{\mathbb{P}\left[ #1 \right]}
\newcommand{\ds}[1]{\displaystyle{#1}}
\newcommand{\ignore}[1]{}
\newcommand{\beq}[1]{\begin{equation}\label{#1}}
\newcommand{\eeq}{\end{equation}}
\newcommand{\Bin}{\operatorname{Bin}}
\title{The $t$-tone chromatic number of random graphs}
\author{Deepak Bal\thanks{Department of Mathematical Sciences,
Carnegie Mellon University, Pittsburgh PA15213.} \and Patrick Bennett \thanks{Department of Mathematical Sciences, Carnegie Mellon University, Pittsburgh PA15213.}\and Andrzej Dudek\thanks{Department of Mathematics, Western Michigan University, Kalamazoo, MI}
 \and Alan Frieze\thanks{Research supported in part by NSF Grant CCF2013110}}
\author{Deepak Bal%
\footnote{\footnotesize {Department of Mathematical Sciences, Carnegie Mellon University, Pittsburgh, PA 15213}}
\newcounter{fnnumber}
\setcounter{fnnumber}{\value{footnote}}
\and
Patrick Bennett%
\footnotemark[\value{fnnumber}]
\and
Andrzej Dudek%
\footnote{\footnotesize {Department of Mathematics, Western Michigan University, Kalamazoo, MI 49024}}
\thanks{\footnotesize{Research supported in part by Simons Foundation Grant \#244712}}
\and
Alan Frieze%
\footnotemark[\value{fnnumber}]
\thanks{\footnotesize{Research supported in part by NSF Grant CCF2013110}}
}
\begin{document}
\maketitle

\begin{abstract}
 A proper 2-tone $k$-coloring of a graph is a labeling of the vertices with elements from $\binom{[k]}{2}$ such that adjacent vertices receive disjoint labels and vertices distance 2 apart receive distinct labels. The 2-tone chromatic number  of a graph $G$, denoted $\t_2(G)$ is the smallest $k$ such that $G$ admits a proper 2-tone $k$ coloring. In this paper, we prove that w.h.p. for $p\ge Cn^{-1/4}\ln^{9/4}n$,  $\t_2(G_{n,p})=(2+o(1))\chi(G_{n,p})$ where $\chi$ represents the ordinary chromatic number. For sparse random graphs with $p=c/n$, $c$ constant, we prove that $\t_2(G_{n,p}) = \ceil{\of{\sqrt{8\D+1} +5}/{2}}$ where $\D$ represents the maximum degree. For the more general concept of $t$-tone coloring, we achieve similar results.
\end{abstract}

\section{Introduction}
The ordinary chromatic number of a graph $G$, denoted $\chi(G)$ is the fewest number of colors necessary to label the vertices of $G$ such that no two adjacent vertices receive the same color.  There have been many generalizations of this concept, for example list coloring, $t$-set coloring \cite{BT79}, and distance-$t$ colorings \cite{CGH}.  A natural extension which further generalizes the concepts mentioned above is that of a $t$-tone coloring. Chartrand introduced $t$-tone coloring as a generalization of proper coloring, which is equivalent to 1-tone coloring. The concept was initially studied in a research group directed by Zhang \cite{FGPS} and then investigated by Bickle and Phillips \cite{BP}. 

Throughout the paper, if $\ell\le k$  are positive integers, $[k]$ refers to the set $\set{1,2,\ldots,k}$ and $\binom{[k]}{\ell}$ refers to the collection of $\ell$ sized subsets of $[k]$. For vertices $u$ and $v$ of $G$, $d(u,v)$ refers to the distance between $u$ adnd $v$, \emph{i.e.} the minimum number of edges on a path between $u$ and $v$. We may now give the formal definition of a $t$-tone coloring which appears in \cite{FGPS} and \cite{BP}.
\begin{definition}
 Let $G=(V,E)$ be a graph and let $t$ be a positive integer. A (proper) $t$-tone $k$-coloring of a graph is a function $f:V(G)\rightarrow \binom{[k]}{t}$ such that $\abs{f(u) \cap f(v)} < d(u,v)$ for all distinct vertices $u$ and $v$. A graph that admits a $t$-tone $k$-coloring is $t$-tone $k$-colorable. The $t$-tone chromatic number of $G$, denoted $\t_t(G)$ is the least integer $k$ such that $G$ is $t$-tone $k$-colorable. 
\end{definition}

For a vertex $v\in V(G)$, we call $f(v)$ the \emph{label} on $v$. The elements of $f(v)$ are colors. In this paper, we are concerned primarily with the 2-tone chromatic number, $\t_2(G).$ Note that the definition in this case says that adjacent vertices receive disjoint labels and vertices at distance 2 receive distinct labels. The classical Erd\H os-R\'enyi-Gilbert random graph $G_{n,p}$ is a graph on vertex set $[n]$ in which each potential edge in $\binom{[n]}{2}$ appears independently with probability $p$. We say that an event occurs \emph{with high probability}, denoted w.h.p, if the probability of the event tends to 1 as $n$ tends to infinity. The main results of this paper concern $\t_2(G_{n,p})$ in 2 ranges of $p$.
On the dense end of the spectrum, we have the following result.
\begin{maintheorem}\label{densemainthm}
 Let $p=p(n)$ satisfy $Cn^{-1/4}\ln^{9/4}n \le p < \eps < 1$ where $C$ is a sufficiently large constant and $\eps$ is any constant $<1$. Then w.h.p., 
\[\t_2(G_{n,p}) = (2+o(1))\chi(G_{n,p}).\]
\end{maintheorem}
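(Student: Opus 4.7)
The plan is to prove the lower and upper bounds separately.

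\textbf{Lower bound.} Suppose $f:V(G_{n,p})\to\binom{[k]}{2}$ is a proper $2$-tone $k$-coloring. For each color $c\in[k]$, the set $S_c:=\{v:c\in f(v)\}$ is independent in $G_{n,p}$, since adjacent vertices have disjoint labels. As every vertex belongs to exactly two of the $S_c$, we have $\sum_c|S_c|=2n$; some $|S_c|\ge 2n/k$ and hence $k\ge 2n/\a(G_{n,p})$. In the range of $p$ under consideration, it is classical that whp $\chi(G_{n,p})=(1+o(1))\,n/\a(G_{n,p})$, so $k\ge(2+o(1))\chi(G_{n,p})$.

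\textbf{Upper bound.} I will assemble the $2$-tone coloring from two ordinary proper colorings on disjoint palettes. Let $q=(1+o(1))\chi(G_{n,p})$ and fix (whp) a proper $q$-coloring $g:V\to[q]$ with color classes $I_1,\ldots,I_q$. Since $p\gg\sqrt{\log n/n}$, whp $G_{n,p}$ has diameter $2$, so any two distinct vertices in the same class $I_j$ are at $G_{n,p}$-distance exactly $2$. Define the auxiliary graph $H$ on $V$ by $E(H):=E(G_{n,p})\cup\bigcup_{j=1}^{q}\binom{I_j}{2}$, i.e., $G_{n,p}$ together with a clique on each color class of $g$. Suppose we can produce (whp) a proper $q'$-coloring $h:V\to[q']$ of $H$ with $q'=(1+o(1))\chi(G_{n,p})$. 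Setting $f(v):=\{g(v),\,q+h(v)\}\subset[q+q']$ yields a valid $2$-tone coloring: the two coordinates lie in disjoint ranges, so for adjacent $u,v$ the disjointness $f(u)\cap f(v)=\emptyset$ reduces to $g(u)\ne g(v)$ (automatic) and $h(u)\ne h(v)$ (from $uv\in E(G_{n,p})\subseteq E(H)$ and $h$ being proper on $H$); for $u\ne v$ at distance $2$, either $g(u)\ne g(v)$ and the labels already differ in the low coordinate, or $g(u)=g(v)$ in which case $uv\in E(H)$ via the added clique, forcing $h(u)\ne h(v)$. The total palette has $q+q'=(2+o(1))\chi(G_{n,p})$ colors.

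The crux is proving $\chi(H)\le(1+o(1))\chi(G_{n,p})$ whp. My plan is to adapt the standard iterative proof of $\chi(G_{n,p})\sim n/\a(G_{n,p})$ to $H$. An independent set of $H$ is exactly an independent set of $G_{n,p}$ that is rainbow with respect to $g$. For a typical subset $S$ of size $s\approx\a(G_{n,p})$, the probability that $S$ fails to be rainbow is at most $\binom{s}{2}/q=o(1)$, because $n/\a^3\to\infty$ for every $p\ge Cn^{-1/4}\ln^{9/4}n$. Consequently I expect to obtain $\a(H[V'])\ge(1-o(1))\a(G_{n,p})$ uniformly over all sufficiently large $V'\subseteq V$, essentially matching the classical concentration for induced subgraphs of $G_{n,p}$. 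Iteratively peeling off such near-maximum independent sets of $H$, each receiving its own new color, produces a proper coloring of $H$ with $(1+o(1))n/\a(G_{n,p})=(1+o(1))\chi(G_{n,p})$ colors, while a negligible residue (of size $n^{1-\delta}$, say) is absorbed into $o(q)$ additional colors at the end.

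The principal obstacle I anticipate is controlling the ``large independent set'' property uniformly throughout the extraction: once many rounds have been performed, some classes $I_j$ are exhausted, the residual $g$-partition becomes highly unbalanced, and the concentration of $\a(H[V_t])$ requires more care than in the pure $G_{n,p}$ argument. The hypothesis $p\ge Cn^{-1/4}\ln^{9/4}n$ should enter here, providing enough polylogarithmic slack to control the degree sequence of $G_{n,p}[V_t]$, the independence number $\a(G_{n,p}[V_t])$, and the rainbow condition simultaneously across the $\approx q$ removal steps.
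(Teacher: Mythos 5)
Your overall strategy matches the paper's. Both use the same lower bound (counting incidences of colors with vertices to show $k\ge 2n/\a(G_{n,p})$), and your auxiliary graph $H$ is exactly the paper's device in different clothing: a proper coloring of $H$ on a fresh palette is precisely a second proper coloring of $G_{n,p}$ whose color classes intersect each part of $g$'s partition in at most one vertex (the paper calls this ``respecting the partition $\mathcal{P}$''), and the paper likewise assembles the $2$-tone coloring from the pair $(g,h)$. The iterative peeling plus a greedy finish on a small residue is also the paper's plan.

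The gap is the one you yourself flag as ``the crux.'' Your heuristic estimate — that a set $S$ of size $s\approx\a$ is rainbow with probability $\ge 1-\binom{s}{2}/q$ — treats the color classes of $g$ as if they were independent of $G_{n,p}$, which they are not: $g$ is constructed from $G_{n,p}$. The paper's fix is to prove the large-independent-set property \emph{uniformly over all partitions $\mathcal{P}$ of $[n]$ into parts of size at most $k$} (Lemma~\ref{denselem1}), not just the partition actually produced by $g$. This requires a failure probability small enough to survive a union bound over roughly $(3n/k)^n\cdot 2^n$ pairs $(\mathcal{P},U)$; Janson's inequality gives $\exp\{-\Omega(n^2/(s^4\ln^4 n))\}$, and demanding that this beat $(3n/k)^n 2^n$ is exactly where $p\ge Cn^{-1/4}\ln^{9/4}n$ comes from. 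Your $o(1)$ bound is off by many orders of magnitude — you would need something like $\exp\{-\omega(n\ln n)\}$ — and your observation that $n/\a^3\to\infty$ for $p\gg n^{-1/3}\ln n$ in fact misidentifies the bottleneck: the threshold in the theorem is strictly stronger than $n^{-1/3}\ln n$, and it is the cost of the union bound over all partitions, not the rainbow event for a single $S$, that drives it. So the proposal correctly reduces the problem and correctly predicts where the hypothesis on $p$ is used, but the probability bound offered for the key lemma is both the wrong order and computed under an unjustified independence assumption; the missing ingredient is the union bound over partitions combined with the Janson-type exponential estimate.
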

For sparse random graphs we prove the following:
\begin{maintheorem}\label{sparsemainthm}
Let $c$ be a constant, and let $p=c/n$. If we let $\D$ represent maximum degree, then w.h.p.,\[\t_2(G_{n,p}) = \ceil{\frac{\sqrt{8\D + 1} + 5}{2}}.\]
\end{maintheorem}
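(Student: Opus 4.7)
This is deterministic and holds for any graph. Let $v^*$ be a vertex of maximum degree $\Delta$. Its label $f(v^*)\in\binom{[k]}{2}$, and each neighbor $u$ of $v^*$ satisfies $f(u)\cap f(v^*)=\emptyset$ since $d(u,v^*)=1$; moreover any two neighbors of $v^*$ lie at distance at most $2$ via $v^*$, so have distinct labels. Thus $N(v^*)$ carries $\Delta$ pairwise-distinct $2$-subsets of $[k]\setminus f(v^*)$, forcing $\binom{k-2}{2}\ge\Delta$, and the minimum $k$ satisfying this is exactly $\ceil{(\sqrt{8\Delta+1}+5)/2}$.

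\textbf{Upper bound: setup.} Set $k:=\ceil{(\sqrt{8\Delta+1}+5)/2}$, so $\binom{k-2}{2}\ge\Delta$.  I would first establish several structural properties of $G_{n,c/n}$ that hold \whp.  First, $\Delta=(1+o(1))\ln n/\ln\ln n$.  Second, for any fixed constant $g$, the set $H$ of vertices of degree at least $\Delta-g$ has $|H|=O((\ln n)^g)$ and any two members of $H$ are at graph distance larger than any prescribed constant (a first-moment computation using $\mathbb{P}[\deg(v)\ge\Delta-g]=O(\Delta^g/n)$).  Third, the $L$-ball around any vertex of $H$ is a tree for any fixed $L$, since the expected number of cycles of length up to $2L$ through a given $H$-vertex is $O(\Delta^{2L}/n)=o(1)$.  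Fourth, all short cycles of $G$ are confined to $O(1)$ bounded-size subgraphs in which every vertex has bounded degree, and these lie far from $H$.

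\textbf{Upper bound: coloring.} For each $v\in H$, set $f(v)=\{1,2\}$ and label the $\deg(v)\le\Delta$ neighbors of $v$ with distinct $2$-subsets of $\{3,\ldots,k\}$; this is possible because $\binom{k-2}{2}\ge\Delta$, and the far-apartness of $H$ guarantees that these partial colorings do not conflict. Then extend to the remaining vertices by a breadth-first greedy rooted in the colored seed. At a typical vertex $w$ with a unique back-neighbor $u$ (the tree-like case), the already-colored distance-$2$ vertices all lie in $N(u)\setminus\{w\}$ and number at most $\deg(u)-1\le\Delta-1$, while the candidates for $f(w)$ are the $\binom{k-2}{2}$ two-subsets of $[k]\setminus f(u)$. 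Since $\binom{k-2}{2}\ge\Delta$ and at most $\Delta-1$ candidates are forbidden by distance-$2$ constraints, at least one valid label for $w$ remains.

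\textbf{Main obstacle.} The delicate case is a vertex $w$ with multiple back-neighbors $u_1,\ldots,u_r$, forced by a cycle: then $f(w)$ must avoid $\bigcup_i f(u_i)$, shrinking the candidate palette to $\binom{k-|\bigcup_i f(u_i)|}{2}$.  Near a high-degree vertex this could be fatal, but the tree-like $L$-ball property rules out any such $w$ within bounded distance of $H$ \whp.  Outside this region the cycle-participating vertices have bounded degree, so both the palette shrinkage $|\bigcup_i f(u_i)|=O(1)$ and the back-distance-$2$ count $O(1)$ are negligible against the slack $\binom{k-O(1)}{2}=\Omega(\Delta)$.  The short-cycle structural bound reduces this to a finite-case analysis on $O(1)$ bounded-size exceptional subgraphs, after which the tree-like BFS greedy extends the coloring to all of $V(G)$, yielding the required $2$-tone $k$-coloring \whp.
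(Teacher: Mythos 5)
Your lower bound is correct and is essentially the standard counting argument (the $\D$ neighbors of a maximum-degree vertex need pairwise distinct labels drawn from $\binom{[k]\setminus f(v^*)}{2}$, forcing $\binom{k-2}{2}\ge \D$, which inverts to the stated ceiling). The upper bound, however, has a genuine gap, and it is located exactly where you flag the ``main obstacle.'' The problem is that you set the high-degree threshold at $\D-g$ for constant $g$, which leaves far too many dangerous vertices outside your seeded set $H$. Since $\binom{k-2}{2}-\D$ can be as small as $0$, the greedy has essentially no slack: a vertex $w$ whose colored second neighborhood has size $\ge\binom{k-2}{2}$, or which has two colored neighbors (shrinking the palette to $\binom{k-4}{2}=\binom{k-2}{2}-\Theta(\sqrt{\D})$) together with $\approx\D$ colored vertices at distance $2$, kills the greedy. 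Such configurations are \emph{not} confined to rare bounded-degree short cycles: w.h.p.\ $G_{n,c/n}$ contains cherries $u_1$--$w$--$u_2$ with $\deg(u_1)+\deg(u_2)\ge\D$ (e.g.\ both of degree $\approx\D/2$; the expected count is of order $2^{\D}=n^{o(1)}\cdot$(something tending to infinity), not $o(1)$), and for $c>1$ the giant component has $\Theta(n)$ excess edges, so $\Theta(n)$ vertices acquire multiple back-neighbors in any BFS. Your claim that cycle-participating vertices have bounded degree is also false (a vertex on a triangle can have degree tending to infinity). So the ``finite-case analysis on $O(1)$ exceptional subgraphs'' does not exist, and the BFS greedy as described can get stuck.

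The paper's proof repairs exactly this by setting the threshold polylogarithmically low: $V_0=\{v:\deg(v)\ge b_0\}$ with $b_0=\ln^{1/4}n$, and $V_2=V_0\cup N(V_0)\cup N^2(V_0)$. Using the configuration model it shows $G[V_2]$ is w.h.p.\ a forest, applies the exact tree formula $\t_2(T)=\k_\D$ of \cite{FGPS} to $2$-tone color $G[V_2]$, and then \emph{uncolors} $N^2(V_0)$ so that the retained partial coloring on $V_1=V_0\cup N(V_0)$ is consistent with distances in $G$ itself (any $G$-path of length $\le 2$ between $V_1$-vertices stays inside $V_2$). Every remaining vertex then has degree $<b_0$ and at most $b_0^2$ vertices at distance $2$, so the greedy faces at most $2b_0\k_\D+b_0^2=o(\D)$ forbidden labels against $\binom{\k_\D}{2}\ge\D$ candidates --- an enormous margin, with no case analysis about cycles needed. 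To salvage your approach you would need to (i) lower your threshold from $\D-O(1)$ to something like $\ln^{1/4}n$, and (ii) prove a structural statement strong enough to exactly $2$-tone color the entire $2$-neighborhood closure of the high-degree set (the paper does this via the forest property plus the known tight tree result), rather than relying on the greedy near any vertex of degree $\omega(\sqrt{\D})$.
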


In the dense range w.h.p., the diameter of $G_{n,p}$ is 2. Thus finding a $t$-tone coloring of the random graph in this range amounts to finding labels which are disjoint on adjacent vertices and intersect in at most one color on non-adjacent pairs. For this reason, our proof techniques for the $t=2$ case may be easily extended to the $t\ge 3$ case. Our result in the sparse case relies on another tight result
for 2-tone colorings of trees. There is no known analogous tight result for $t$-tone colorings with $t\ge 3$. Hence the result we have in the sparse case for $t\ge3$ is weaker. The results for $t\ge 3$ appear as Theorems \ref{densemainthmgeneral} and \ref{sparsemaingen} in Section \ref{general}.

\section{A lower bound on $\t_t(G)$}

Consider a $t$-tone $k$-coloring of any graph $G$ on vertex set $[n]$, and for each $i \in [k]$, let $S_i$ be the set of vertices that have color $i$ as one of their $t$ colors. When we sum $|S_i|$ over $i$, each vertex is counted $t$ times (once for each color it has). Thus 
$$tn = \sum_i |S_i| \le k \cdot \a \of{G}$$
so $\t_t(G)  \ge \frac{tn}{\a \of{G}}$.
The above inequality, together with known bounds on $\chi\of{G_{n,p}}\sim n/\a\of{G_{n,p}}$  give us a lower bound on the $t$-tone chromatic number of $G_{n,p}$. In particular, 
\[
 \t_t(G_{n,p})  \ge (t-o(1))  \chi(G_{n,p}) 
\]
w.h.p..

\section{Upper bound for dense case}

Throughout this section, let $G=G_{n,p}$ on vertex set $[n]$ and let $Cn^{-1/4}\ln^{9/4}n \le p < 1$ where $C$ is a sufficiently large constant. We adapt the proof strategy of Bollob\'as \cite{BolBook} for obtaining bounds on the ordinary chromatic number $\chi(G)$. Bollob\'as' strategy requires two key facts. First, one shows that w.h.p. every sufficiently large subgraph has an independent set almost as large as $\a \of{G_{n,p}}$. Then we show that w.h.p. there are no small subgraphs with high edge density. The strategy for coloring $G_{n,p}$ is as follows: iteratively find a maximum independent set in the graph and remove the vertices, until the remaining set of vertices is sufficiently small. The remaining graph does not have high edge density. Thus we may greedily color the rest of the vertices using new colors (and not very many of them). W.h.p. the resulting coloring uses $\frac{n}{\a \of{G_{n,p}}} (1+o(1))$ colors, which is clearly asymptotically optimal.

If we want a $2$-tone coloring, we may begin by giving $G_{n,p}$ an ordinary proper coloring as above. But then we have to assign each vertex another color, and the colors we assign in this second pass must be carefully chosen with regard to the colors that are already there. 
From the lower bound, we know that we will need to use at least (roughly) twice the number of colors we would need for an ordinary coloring. So for our second pass we might as well use new colors (\emph{i.e.} none of the same colors we used in the first pass). Also, w.h.p. the diameter of $G_{n,p}$ is $2$ for this range of $p$, so in our final $2$-tone coloring we cannot assign any two vertices the same pair of colors.

Let $\mathcal{P}=\left\{P_1, P_2, \ldots , P_a \right\}, \mathcal{R}=\left\{R_1, R_2, \ldots , R_b \right\}$ be partitions of $[n]$.
We will say that a set $\mathcal{R}$ \emph{respects} $\mathcal{P}$ if $|R_i \cap P_j| \leq 1$ for all $i,j$. We also say that a specific set $R$ respects $\scr{P}$ if $\abs{R\cap P_j} \le 1$ for all $j$.
We can find a $2$-tone coloring of $G_{n,p}$ by finding two ordinary colorings such that the partitions generated by the color classes respect each other. We will accomplish that task with the proof strategy discussed above in mind. Start with a partition $\mathcal{P}$ of $[n]$ into sets of vertices that are independent in $G_{n,p}$ (i.e. the parts of $\mathcal{P}$ are color classes of a proper coloring). Then iteratively find large independent sets that respect  $\mathcal{P}$ and remove those vertices from the graph. Once the remaining graph is sufficiently small, it has low enough edge density to be colored greedily using all new colors without having a significant impact on the total number of colors used. 

For ease of notation, set $b:=\frac{1}{1-p}$, and set $k:= \lceil 3 \log_b n  \rceil = \ceil{ \frac{3 \ln n }{\ln b}}$. The following bounds are well known (see e.g. \cite{BolBook},\cite{JLR}): w.h.p. $\a \of{G_{n,p}} < k$ and $\chi \of{G_{n,p}} < \frac{3n}{k}$. 
The two key lemmas we will require to prove Theorem \ref{densemainthm} are as follows:

\begin{lemma}\label{denselem1}
 W.h.p. for every partition $\mathcal{P}$ of $[n]$ into parts of size at most $k$, and every set  $U \subset [n]$ of size $|U| > \frac{n}{\ln^2 n}$, $G[U]$ contains an independent set of size at least $s_0: = \lceil 2 \log_b n - 2 \log_b \log_b n - 5\log_b \ln n \rceil = \ceil{ \frac{2 \ln n + 2 \ln \ln b  - 7 \ln \ln n}{\ln b}}$ which respects $\mathcal{P}$.
\end{lemma}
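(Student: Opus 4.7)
The plan is to apply the first moment method combined with Janson's inequality, then take a union bound over the admissible pairs $(U, \mathcal{P})$. Fix $U \subseteq [n]$ with $|U| > n/\ln^2 n$ and a partition $\mathcal{P}$ of $[n]$ with parts of size at most $k$. Let $\mathcal{S} = \mathcal{S}(U, \mathcal{P})$ denote the collection of $s_0$-subsets of $U$ that respect $\mathcal{P}$, and let $X$ be the number of $S \in \mathcal{S}$ which are independent in $G_{n,p}$. For a uniformly random $s_0$-subset of $U$, each of the $\binom{s_0}{2}$ pairs of positions falls into a common part of $\mathcal{P}$ with probability at most $(k-1)/(|U|-1)$, so the subset fails to respect $\mathcal{P}$ with probability $O(s_0^2 k/|U|) = o(1)$ throughout the range $p \ge Cn^{-1/4}\ln^{9/4}n$ (using $s_0, k = O(\log_b n) = O(\ln n/p)$ and $|U| \ge n/\ln^2 n$). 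Hence $\mu := \E[X] = (1-o(1))\binom{|U|}{s_0}(1-p)^{\binom{s_0}{2}}$.

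Next, I would show that $\mu$ is extremely large. A direct calculation with Stirling's formula (analogous to the classical first-moment analysis of $\alpha(G_{n,p}) \sim 2\log_b n$) shows that the slack of $5\log_b\ln n$ beyond the ``threshold'' $2\log_b n - 2\log_b\log_b n$ (at which $\binom{|U|}{s}(1-p)^{\binom{s}{2}} \asymp 1$) forces $\ln \mu = \Omega(\ln n \, \ln \ln n / \ln b) = \omega(\ln n)$. In particular $\mu \gg n \ln n$ uniformly. Since the events ``$S$ is independent in $G_{n,p}$'' are increasing in the non-edges of the graph, Janson's inequality gives
\[
 \mathbb{P}(X = 0) \le \exp(-\mu + \Delta), \qquad \Delta := \sum_{\{S,T\}:\, |S \cap T| \ge 2}\mathbb{P}(S \text{ and } T \text{ are both independent}).
\]
Grouping ordered pairs $(S,T)$ by $i := |S \cap T|$ and using the crude bound $N_i \le |\mathcal{S}|\binom{s_0}{i}\binom{|U|-s_0}{s_0-i}$, the standard term-by-term estimate yields $\Delta/\mu \le \tfrac{1}{2}\sum_{i=2}^{s_0-1} \binom{s_0}{i}\binom{|U|-s_0}{s_0-i}(1-p)^{\binom{s_0}{2}-\binom{i}{2}}$, dominated by the $i=2$ term of order $s_0^4/(|U|^2(1-p))$; the lower bound on $p$ keeps this $o(1)$, so $\Delta = o(\mu)$ and therefore $\mathbb{P}(X = 0) \le \exp(-\mu/2)$.

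Finally, there are at most $2^n$ choices of $U$ and at most $e^{O(n \ln n)}$ partitions of $[n]$ into parts of size at most $k$ (by a Stirling-number bound). Since $\mu \gg n\ln n$,
\[
 2^n \cdot e^{O(n \ln n)} \cdot \exp(-\mu/2) = o(1),
\]
so a union bound over pairs $(U,\mathcal{P})$ completes the proof. The main technical hurdle is the verification that $\Delta = o(\mu)$ uniformly in $p$: the $i=2$ contribution in the dependency sum is essentially what forces the lower bound $p \ge Cn^{-1/4}\ln^{9/4}n$ appearing in Theorem~\ref{densemainthm}, and the detailed term-by-term bookkeeping (checking that higher-$i$ terms are controlled by the same geometric-type argument used in the classical $\alpha(G_{n,p})$ analysis) is the delicate part of the proof.
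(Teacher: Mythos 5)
Your overall strategy (Janson's inequality for the number of $\mathcal{P}$-respecting independent $s_0$-sets in $U$, followed by a union bound over the at most $2^n\cdot e^{O(n\ln n)}$ pairs $(U,\mathcal{P})$) is the same as the paper's, and your first-moment computation and the count of partitions are fine. But there is a genuine error at the heart of the argument: the claim that $\Delta=o(\mu)$ is false, and with it the bound $\Pr{X=0}\le\exp(-\mu/2)$. Writing $\Delta_2$ for the $i=2$ contribution, one has $\Delta_2=\Theta\of{\mu^2\, s_0^4/(|U|^2(1-p))}$; the quantity $s_0^4/(|U|^2(1-p))$ that you identify as ``the $i=2$ term of $\Delta/\mu$'' is actually $\Delta_2/\mu^2$ (it is the paper's $a_2$). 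Hence $\Delta/\mu=\Theta(\mu\, s_0^4/|U|^2)$, and since your own estimate gives $\ln\mu=\Omega(s_0\ln\ln n)=\Omega(\ln n\ln\ln n)$, i.e.\ $\mu$ superpolynomial in $n$, we have $\Delta\gg\mu$ throughout the entire range of $p$. The form $\Pr{X=0}\le e^{-\mu+\Delta}$ is therefore vacuous here.

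The correct move — and what the paper does — is to use the generalized form of Janson's inequality, $\Pr{X=0}\le\exp\of{-\mu^2/\bar\Delta}$, which yields only $\Pr{X=0}\le\exp\set{-\Omega(1/a_2)}=\exp\set{-\Omega(|U|^2/s_0^4)}=\exp\set{-\Omega(n^2/(s_0^4\ln^4 n))}$. This is vastly weaker than $e^{-\mu/2}$, and closing the union bound against $e^{O(n\ln n)}$ choices of $(U,\mathcal{P})$ then requires $n^2/(s_0^4\ln^4 n)\gg n\ln n$, i.e.\ $s_0^4\ll n/\ln^5 n$; since $s_0=\Theta(\ln n/\ln b)=\Theta(\ln n/p)$ for small $p$, this is exactly the condition $p\ge Cn^{-1/4}\ln^{9/4}n$. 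So the threshold does not come from making the $i=2$ dependency term $o(1)$ relative to $\mu$ (it never is); it comes from the exponent $\mu^2/\Delta\asymp n^2/s_0^4$ having to beat the entropy of the union bound. You also need the bookkeeping (done in the paper by analyzing the ratios $a_{i+1}/a_i$ and showing $\set{i: a_{i+1}>a_i}$ is an interval) to confirm that $a_2$ really dominates $\sum_i a_i$, including the terms with $i$ close to $s_0$; your appeal to ``the same geometric-type argument'' glosses over this, but it is a standard and repairable point, unlike the $\Delta=o(\mu)$ step.
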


\begin{lemma}\label{denselem2}
W.h.p.  for every set $H \subset [n]$ of size at most $\frac{n}{\ln^2 n}$, $G[H]$ has at most $\frac{n|H|}{k \ln n}$ edges.
\end{lemma}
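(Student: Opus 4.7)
The plan is a Chernoff-plus-union-bound argument. For a fixed $H\subseteq[n]$ of size $h$, the random variable $|E(G[H])|$ is distributed as $\Bin(\binom{h}{2},p)$ with mean $\mu_h := \binom{h}{2}p$; let $m_h := \frac{nh}{k\ln n}$ be the target bound. I would first apply the standard multiplicative Chernoff inequality
\[
\Pr{|E(G[H])| \ge m_h} \;\le\; \left( \frac{e\mu_h}{m_h} \right)^{m_h}
\]
(valid whenever $m_h \ge e\mu_h$), then take a union bound over the $\binom{n}{h}\le(en/h)^h$ subsets of size $h$, and finally sum over $h=1,\ldots,\lfloor n/\ln^2 n\rfloor$. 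It suffices to show that for every such $h$,
\[
m_h\bigl(\ln(m_h/\mu_h) - 1\bigr) \;\ge\; h\ln(en/h) \;+\; \omega(1).
\]

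Substituting $m_h=\frac{nh}{k\ln n}$, $\mu_h=\binom{h}{2}p$, and $k=\lceil 3\log_b n\rceil$ with $b=1/(1-p)$, this reduces to a routine estimate for small $h$, where the ratio $m_h/\mu_h$ is polynomial in $n$ and the Chernoff exponent easily dominates the entropy $h\ln(en/h)$ coming from the binomial coefficient. For the intermediate range of $h$, I would switch to the complementary Chernoff bound $\Pr{\Bin(N,p)\ge(1+\delta)Np}\le\exp(-\delta^2 Np/3)$ with $\delta\in(0,1]$; since $\mu_h=\Theta(h^2p)$ and the hypothesis $p\ge Cn^{-1/4}\ln^{9/4}n$ forces $hp\gg\ln(n/h)$ for every relevant $h$, the exponent $\delta^2\mu_h/3$ dominates $h\ln(en/h)$ with room to spare.

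The main obstacle is the boundary regime where $h$ is close to $n/\ln^2 n$, so that $\mu_h$ and $m_h$ are of the same order and the Chernoff savings per set shrink to $\Theta(\mu_h)$ with a bounded implicit constant. The calculation is tight here: one needs the precise numerical relationship between $k=\lceil 3\log_b n\rceil$, the lower bound on $p$, and the target $m_h=\frac{nh}{k\ln n}$ to be such that the exponent still beats $h\ln(en/h)$. Taking $C$ sufficiently large in the hypothesis on $p$ is what gives the slack required. Summing the resulting probabilities over the $\lfloor n/\ln^2 n\rfloor$ values of $h$ finally yields $o(1)$, proving the lemma.
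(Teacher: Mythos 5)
Your plan --- an upper-tail Chernoff estimate for each fixed $H$ followed by a union bound over the $\binom{n}{h}$ sets of each size $h$ --- is the same strategy the paper uses (the paper simply disposes of $|H|\le \sqrt n$ deterministically, since there $\binom{|H|}{2}$ is already below the target, which replaces your small-$h$ case). The genuine gap is in the boundary regime you yourself flag, and the fix you propose there does not work. Write $h=|H|$, $\mu_h=\binom{h}{2}p$ and $m_h=\frac{nh}{k\ln n}$. Since $k=\lceil 3\log_b n\rceil=(3+o(1))\frac{\ln n}{\ln b}$ and $\ln b=-\ln(1-p)=(1+O(p))p$, we get $kp=(3+o(1))\ln n$ whenever $p=o(1)$ (in particular throughout the critical regime $p=\Theta(n^{-1/4}\ln^{9/4}n)$), hence
\[
\frac{m_h}{\mu_h}\;=\;\frac{2n}{(h-1)kp\ln n}\;=\;\frac{(2+o(1))\,n}{3(h-1)\ln^2 n}.
\]
For $h$ between, say, $\frac{3n}{4\ln^2 n}$ and $\frac{n}{\ln^2 n}$ this ratio is at most $\frac{8}{9}+o(1)<1$: the target $m_h$ sits \emph{below} the mean, so no upper-tail inequality of any form can give $\Pr{|E(G[H])|\ge m_h}=o(1)$ --- for a fixed such $H$ that probability in fact tends to $1$. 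Crucially, enlarging $C$ changes nothing: $p$ enters the ratio only through $p/\ln b\to 1$, so the constant $\tfrac23$ at $h=n/\ln^2 n$ is completely insensitive to the lower bound on $p$. Your sentence asserting that near the boundary ``the Chernoff savings per set shrink to $\Theta(\mu_h)$ with a bounded implicit constant'' is therefore not a tight calculation but a false one: there are no savings at all.

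In fairness, the paper's own proof stumbles on exactly the same point: its $\d=\frac{2n}{(|H|-1)kp\ln n}-1$ is negative near the top of the range, where it needs $\d>2$. The lemma should be read with a harmless constant adjustment --- either bound the edge count by $\frac{C'n|H|}{k\ln n}$ for a fixed constant $C'>1$, or restrict to $|H|\le \frac{n}{2\ln^2 n}$. Either repair costs nothing in the proof of Theorem~1, which only needs $\frac{2|E(G'[H])|}{|H|}=o(\chi(G_{n,p}))$. With that adjustment your argument does close: away from the boundary $\d$ is bounded below by a positive constant, $\mu_h=\Theta(h^2p)$ with $hp\gg\ln(en/h)$ for all $h\ge\sqrt n$, so the exponent beats the entropy term, and the small-$h$ range is handled by your $(e\mu_h/m_h)^{m_h}$ bound (or deterministically). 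The correct lesson is not ``take $C$ larger'' but ``the constant in the statement of the lemma is what is actually tight, and it must be weakened.''
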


Assuming the truth of the lemmas, the proof of Theorem \ref{densemainthm} is as follows:

\begin{proof}[Proof of Theorem \ref{densemainthm}] 
 We start with an ordinary coloring of the vertices using $\of{1+o(1)}\chi \of{G_{n,p}}$ colors, where the partition $\mathcal{P}$ given by the color classes has at most $\frac{3n}{k}$ parts, and each part is of size at most $k$. W.h.p. such a coloring exists.

Now we apply Lemma \ref{denselem1} iteratively, finding large independent sets that respect $\mathcal{P}$ and removing the independent sets from the graph, until less than $\frac{n}{\ln^2 n}$ vertices remain. Let $V'$ be the set of vertices remaining at this point.

We will use a new set of colors for the vertices of $V'$. All we have to do is make sure that the color classes within $V'$ respect the partition $\mathcal{P}$. Thus, the problem of coloring $V'$ is equivalent to finding an (ordinary) coloring of the graph $G'$ with vertex set $V'$, and with the edge set being the union of $E\of{G_{n,p}[V']}$ and the set of edges with both endpoints in the same part of $\mathcal{P}$. The latter set of edges guarantees that no two vertices in the same part of $\mathcal{P}$ will be assigned the same color. Thus, any proper ordinary coloring of $G'$ will serve as a valid completion of our $2$-tone coloring of $G_{n,p}$.

Now the chromatic number of $G'$ is at most its coloring number (see, \emph{e.g.}, Prop. 5.2.2 in \cite{Diestel}), which is at most \[1+ \max \set{\frac{2\abs{E(G'[H])}}{|H|}: H \subset V' }\] where $E(G)$ represents the edge set of a graph $G$. But by Lemma \ref{denselem2}, w.h.p. for all $H \subset V'$ we have $\frac{2\abs{E(G_{n,p}[H])}}{|H|} \le \frac{2n}{k \ln n}$. Of course, $G'[H]$ has some edges that are not in $E(G_{n,p}[H])$. Specifically, $G'[H]$ has all possible edges with both endpoints in the same part of $\mathcal{P}$. Using Jensen's inequality, the convexity of the function $\binom{x}{2}$, and the properties of $\mathcal{P}$, we see that $G'[H]$ has at most $\frac{|H|}{k} \binom{k}{2} = O(\abs{H}k)$ such edges. Therefore for any $H\subset V'$, we have $\frac{2\abs{E(G'[H])}}{|H|} \le \frac{2n}{k \ln n} + O(k)= o(\chi(G_{n,p}))$. Thus we can color $G'$ using a negligible number of colors.
\end{proof}

We now prove the two lemmas.

\begin{proof}[Proof of Lemma \ref{denselem1}]

Fix $\mathcal{P}=\left\{P_1, P_2, \ldots , P_m \right\}$, a partition of $[n]$ into $m< \frac{3n}{k}$ parts of size at most $k$, and let the random variable $X$ be the number of independent sets of size $s:=\lceil 2 \log_b n - 2 \log_b \log_b n - \log_b \ln n \rceil = \ceil{\frac{2 \ln n + 2 \ln \ln b  - 3 \ln \ln n}{\ln b}} $ respecting $\mathcal{P}$. Note that $s>s_0$ from the statement of Lemma \ref{denselem1}. By an application of Janson's inequality (see, \emph{e.g.}, inequality 2.18(ii) in \cite{JLR}), 
\[
 \Pr{X=0} \le \exp \of{- \frac{ \E{\sqbracs{X}}^2}{\sum_{S, S'} \E \sqbracs{{X_S X_{S'}}}}}
\]
where the sum in the denominator is taken over all pairs of sets of vertices $S, S'$ of size $s$ such that $|S \cap S'| \ge 2$ and $S,S'$ respect $\scr{P}$. The random variable $X_S$ is just a $0-1$ indicator for whether $S$ is independent in $G_{n, p}$.

Note that the number of $s$-sets not respecting $\mathcal{P}$ is at most 

\begin{equation}
 \sum_{1 \le i \le m} \binom{P_i}{2} \binom{n}{s-2} = O\of{ m s^2 \binom{n}{s-2}} =  O\of{\frac{s^3}{n}\binom{n}{s}}
\end{equation}
so 
\[
 \E{X} \ge \binom{n }{s} (1-p)^{\binom{s }{ 2}} \of{1-O\of{\frac{s^3}{n}}}.
\]

Now we would like to put an upper bound on the sum in the denominator, $\sum_{S, S'} \E{\sqbracs{X_S X_{S'}}}$. We begin by ignoring the fact that the sum is only taken over pairs $S, S'$ respecting $\mathcal{P}$. Thus
\begin{align*}
 \sum_{S, S'} \E{\sqbracs{X_S X_{S'}}} &\le \sum_{2 \le i \le s} \binom{n }{ s} \binom{s }{ i} \binom{n-s }{ s-i}(1-p)^{2\binom{s }{ 2} - \binom{i }{ 2}} \\
&= \binom{n }{ s}^2 (1-p)^{2\binom{s }{ 2}} \sum_{2 \le i \le s} a_i
\end{align*}
where
\[a_i := \frac{ \binom{s }{ i} \binom{n-s }{ s-i}(1-p)^{ - \binom{i }{ 2}}}{\binom{n }{ s}}.\]
Here we note the bounds
\begin{equation}
 a_2 = \Theta\of{\frac{s^4}{n^2}} \qquad\textrm{ and }\qquad a_3 = \Theta\of{\frac{s^6}{n^3}}.
\end{equation}
To estimate the sum $\sum_{2 \le i \le s} a_i$, we define \[r_i := \frac{a_{i+1}}{a_i} = \frac{(s-i)^2}{(i+1)(n-2s+i+1)}(1-p)^{-i}.\]
Analyzing $r_i$ will help us to analyze $a_i$. For example,  since $r_2 = O\of{\frac{s^2}{n}} <1$, we have that $a_2 > a_3$.

Now for $2\le x<s$, define 
\[f(x) := \ln r_x = 2 \ln(s-x) - \ln(x+1) - \ln(n-2s+x+1) + x \ln \of{\frac{1}{1-p}}\]
and note that 
\[f'(x) = -\frac{2}{s-x} - \frac{1}{x+1} - \frac{1}{n-2s+x+1} + \ln \of{\frac{1}{1-p}}.\]
The first term is negative, but negligible unless $x$ is close to $s$. The second term is negative, but negligible unless $x$ is small. The third term is always negligible as $s=o(n)$. The fourth term is positive and constant with respect to $x$. Therefore, $\set{x\,:\, f'(x) > 0}$ is an interval. So the set $\set{x\,:\, f(x) > 0}$ is also an interval and $\set{i\,:\,r_i > 1} = \set{i\,:\, a_{i+1} > a_i}$ is a set of consecutive integers.

Therefore the largest term $a_i$ is either $a_2$ or $a_{i^*}$ where \[i^* := 1 + \max \{i: r_i > 1\}.\]
 Also, the second largest term is one of $a_2, a_3, a_{i^*}, a_{i^* -1}$ or $a_{i^* + 1}$.
To estimate $i^*$, define $i':= \lceil s\of{1-\frac{1}{\ln n}}\rceil$, and note that 
\begin{align*}
r_{i'-1} &\ge \frac{(s-i'+1)^2}{i' n} \cdot b^{i'}\\
 &\ge \frac{ \of{\frac{s }{\ln n}}^2}{\sqbracs{ s\of{1-\frac{1}{\ln n}}+1} n} \cdot \of{ \frac{n^2} {\log_b ^{2} n \ln n}}^{1-\frac{1}{\ln n}}\\
&\gg 1,
\end{align*}
so $i^* \ge i'$. Now we will estimate $a_i$ for $i \ge i'-1$. First, 
\begin{align*}
a_s = \frac{1}{\binom{ n }{ s} (1-p)^{\binom{s }{ 2}}} &\le \of{\frac{s}{n}}^s b^{s\of{\frac{s-1}{2}}}\\
 &= \exp \left\{s \sqbracs{ \ln s - \ln n + \frac{s-1}{2} \ln b} \right\}\\
&= \exp \bigg\{s \bigg[ (1+o(1))\ln \ln \of{  n \ln b} - \ln \ln b - \ln n \\
&\left.\left.\quad- \frac{1}{2} \ln b + \frac{1}{2} \ln b \of{  \frac{ 2 \ln n+ 2 \ln \ln b - 3 \ln \ln n }{\ln b} }\right] \right\}\\
&= \exp \left\{ -\Omega \of{s \ln \ln n} \right\}= \exp \left\{ -\Omega \of{\ln n \ln \ln n} \right\}\\
 &< a_3
\end{align*}
Now for any $i'-1 \le i < s$ we have

\begin{align*}
\displaystyle{a_i = \frac{\binom{s}{i} \binom{n-s }{ s-i}(1-p)^{ - \binom{i }{ 2}}}{\binom{n }{ s}}} &\le \displaystyle{ \binom{s }{ s-i} \binom{n-s }{ s-i} a_s } \\ 
&\le \displaystyle{ \of{\frac{e^2 n s}{(s-i)^2}}^{s-i} a_s } \\
&\le \ds{ \of{e^2 n s}^{\frac{s}{\ln n}+1} a_s } \\
&\le\exp\set{\of{\frac{s}{\ln n}+1}\of{2+\ln n + \ln s} - \Omega\of{s\ln\ln n}}\\
&\le\ds{ \exp \left\{ -\Omega \of{\ln n \ln \ln n} \right\}}\\
&< a_3
\end{align*}
Therefore, the largest of the $a_i$ is $a_2$, and the second largest is $a_3$. In particular, 
\[
 \sum_{2 \le i \le s} a_i \le a_2 + s a_3 = O\of{\frac{s^4}{n^2}}.
\]
Thus, 
\begin{equation}\label{expineq}
 \Pr{X=0} \le \exp \left\{- \Omega\of{ \frac{n^2}{s^4} } \right\}.
\end{equation}

 Let $B$ be the number of pairs $(\mathcal{P}, U)$ of partitions $\mathcal{P}$ and sets $U$ for which the lemma fails. We will bound $\Mean{B}$ using a union bound, linearity of expectation, and inequality \eqref{expineq}. Note that since $\frac{n}{\ln^2 n} < |U| \leq n$, and since we are looking for independent sets of size 
\[
 s_0=\lceil 2 \log_b n - 2 \log_b \log_b n - 5\log_b \ln n \rceil \le \left\lceil 2 \log_b \of{|U|} - 2 \log_b \log_b \of{|U|} - \log_b \ln \of{|U|} \right\rceil
\]
within $G[U]$, the inequality \eqref{expineq} applies. Now for fixed $\mathcal{P}, U$, the probability that $G[U]$ has no independent set of size $s_0$ respecting $\mathcal{P}$ is at most 
\[
 \exp \left\{- \Omega\of{ \frac{|U|^2}{s_0^4} } \right\} \le  \exp \left\{- \Omega\of{ \frac{n^2}{s^4 \ln^4 n} } \right\}.
\]
Thus,
\[
 \Mean{B } \le \of{\frac{3n}{k}}^n \cdot 2^n \cdot \exp \left\{- \Omega\of{ \frac{n^2}{s^4 \ln^4 n} } \right\}.
\]
which is $o(1)$ as long as $p\ge Cn^{-1/4}\log^{9/4}$ for $C$ a sufficiently large constant.
\end{proof}

Here's the proof of the second lemma:
\begin{proof}[Proof of Lemma \ref{denselem2}]

First note that for $|H| \le n^\frac{1}{2}$, we are done since $G[H]$ can only have $\binom{|H|}{2} = o\of{\frac{n|H|}{k \ln n}}$ edges. So we turn our attention to larger sets $H$.

Recall the Chernoff bound:
$$\P[Bin(n, p) > (1+\d)np] < \exp \of{-\frac{\d np}{2}}$$ for all $\d>2$. This is a slightly modified version of (2.5) from \cite{JLR}. From this we may deduce that
$$\P \left[E(G[H]) > (1+\d)\binom{\abs{H}}{2}p \right] < \exp \of{-\frac{\d \binom{\abs{H} }{ 2}p}{2}}.$$
Setting $(1+\d)\binom{|H|}{2}p = \frac{n|H|}{k \ln n}$ and solving for $\d$ yields
$$\d = \frac{2n}{(\abs{H}-1)kp \ln n} -1 = \Omega \of{\frac{n}{|H| \ln^2 n}}.$$
So for any fixed $H$ with $ n^\frac{1}{2} < |H| <\frac{n}{\ln^2 n}$, the probability that $G[H]$ has too many edges is at most $\exp \of{-\Omega\of{\frac{n |H| p}{ \ln^2 n}}} = o\of{2^{-n}},$ so w.h.p. there are no such sets $H$. 
\end{proof}

\section{Sparse graphs ($p=c/n$)}
The overall plan for $G=G_{n,p}$ with $p=c/n$, $c$ constant, is to first 2-tone color a set of vertices that includes high degree vertices and two neighborhoods. We will show this set is a forest and then apply the result of \cite{FGPS} which says the 2-tone chromatic number of a tree, $T$ with maximum degree $\D$ is 
\begin{equation}\label{treechromatic}\t_2(T) = \ceil{\frac{\sqrt{8\D+1} +5}{2}} =: \k_\D\end{equation} 
The remaining vertices will be easier to color. This process will yield a proof of Theorem \ref{sparsemainthm}.

Let the vertex set of $G$ be $V$, let $b_0=\ln^{1/4}n$ and let \[V_0 := \braces{v\in V \,:\, deg(v) \ge b_0}.\]
For $k\ge 1$, let
\begin{equation}\label{Vkdefn}V_k := V_0 \cup \bigcup_{i=1}^{k}N^{i}\of{V_0}\end{equation}
where $N^i(Z)$ for $i\ge1$ represents the set the vertices whose distance to vertex set $Z$ is $i$.
Let $H$ represent $G\sqbs{V_2}$, the graph induced on the vertex set $V_2$.  

In the following proofs, we will make use of the configuration model (defined below) on a ``typical'' degree sequence. This is defined as follows:
\begin{definition}
A degree sequence $(d_1,d_2,\ldots,d_n)$ is called \emph{typical} if the following three properties hold:
\begin{enumerate}
\item $\frac{1}{2}\sum_{i=1}^{n}d_i \ge \frac{cn}{3}$,
\item $\ln^{3/4}n \le \max_{1\le i\le n}d_i \le \ln n$,
\item $\abs{\set{i\,:\,d_i\ge b_0}} \le n\ln n\exp\set{-\ln^{1/4}n}$.
\end{enumerate}
\end{definition}

Such degree sequences are called typical because 
\begin{lemma}\label{Gtypical}
With probability $1-o(1)$, the degree sequence of $G$ is typical.
\end{lemma}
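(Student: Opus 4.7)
The plan is to verify the three defining properties of a typical degree sequence one at a time, each of them reducing to a concentration estimate on (nearly) independent binomial degrees.

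For property (1), the total degree $\sum_i d_i$ equals $2|E(G)|$, and $|E(G)|$ is distributed as $\mathrm{Bin}(\binom{n}{2},c/n)$ with mean $\sim cn/2$. A one-sided Chernoff bound (e.g.\ the lower-tail version of (2.6) in \cite{JLR}) gives $|E(G)|\ge cn/3$ with probability $1-\exp(-\Omega(n))$, which is more than enough.

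For the upper bound in property (2), each $d_i\sim\mathrm{Bin}(n-1,c/n)$ is stochastically dominated by $\mathrm{Bin}(n,c/n)$ whose mean is $\le c$. The standard Chernoff upper-tail estimate $\Pr[\mathrm{Bin}(n,c/n)\ge \ln n]\le (ec/\ln n)^{\ln n}=n^{-\omega(1)}$ then lets a union bound over $i\in[n]$ finish this half. For the lower bound $\max_i d_i\ge \ln^{3/4}n$ the only subtlety is that the $d_i$'s are not independent, so I would split $[n]$ into disjoint halves $A,B$ of size $\lfloor n/2\rfloor$ and let $d_A(v)$ be the number of edges from $v\in A$ to $B$; these quantities are independent $\mathrm{Bin}(\lfloor n/2\rfloor,c/n)$ random variables. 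A routine computation with Stirling's formula yields
\[
\Pr[d_A(v)\ge \ln^{3/4}n]\ \ge\ \exp\!\bigl(-\tfrac14\ln^{3/4}n\cdot \ln\ln n\,(1+o(1))\bigr)\ =\ \exp(-o(\ln n)),
\]
so this probability is $\gg 1/n$. Consequently
\[
\Pr\!\left[\max_{v\in A}d_A(v)<\ln^{3/4}n\right]\ \le\ \bigl(1-\exp(-o(\ln n))\bigr)^{\lfloor n/2\rfloor}\ =\ o(1),
\]
and since $d(v)\ge d_A(v)$ the lower bound on $\max_i d_i$ follows.

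For property (3), let $W=|\{i:d_i\ge b_0\}|$ with $b_0=\ln^{1/4}n$. Each $d_i$ is stochastically dominated by $\mathrm{Bin}(n,c/n)$, and the dominant term of its upper tail is $c^{b_0}/b_0!$. Using Stirling,
\[
\Pr[d_i\ge b_0]\ \le\ (1+o(1))\,\frac{c^{b_0}}{b_0!}\ \le\ \exp\!\bigl(-\tfrac14 b_0\,\ln\ln n\,(1+o(1))\bigr),
\]
so $\E[W]\le n\exp(-\Omega(\ln^{1/4}n\ln\ln n))$. This is $o\!\bigl(n\ln n\cdot\exp(-\ln^{1/4}n)\bigr)$, and Markov's inequality gives $W\le n\ln n\exp(-\ln^{1/4}n)$ whp. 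Combining the three bullets via a final union bound over the three bad events completes the proof.

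The only real obstacle is the Stirling bookkeeping in the last two parts, where one must verify that the $\ln\ln n$ factor produced by $b_0!$ (respectively $(\ln^{3/4}n)!$) really is dominant in the exponent, so that the resulting bounds fit inside the thresholds prescribed in the definition of typicality; the non-independence in the lower bound for $\max_i d_i$ is dispatched cheaply by the $A/B$ splitting trick above.
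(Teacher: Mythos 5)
Your proof is correct and matches the paper's proof in overall structure, with one genuine difference in Property~2. For Property~1 (Chernoff on $|E(G)|$) and Property~3 (bound the tail of $d_i$, then Markov on $|V_0|$) you do essentially what the paper does; the paper uses the cruder Chernoff bound $\Pr[\operatorname{Bin}(n-1,c/n)\ge \ln^{1/4}n]\le \exp(-\ln^{1/4}n)$ while you carry the Stirling factor $\ln\ln n$ in the exponent, which is sharper than needed but perfectly fine. For Property~2 the paper simply cites the well-known fact from \cite{BolBook} that the maximum degree of $G_{n,c/n}$ is $\Theta(\ln n/\ln\ln n)$ w.h.p., which places it strictly between $\ln^{3/4}n$ and $\ln n$. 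You instead give a self-contained argument: a union-bounded Chernoff upper tail for $\max_i d_i\le\ln n$, and the $A/B$ splitting trick to manufacture independence for the lower bound $\max_i d_i\ge\ln^{3/4}n$. The split trick is a clean, elementary substitute for citing the sharp max-degree theorem, and it buys self-containment at the cost of a paragraph of bookkeeping. One small slip: in your Stirling estimate for $\Pr[d_A(v)\ge\ln^{3/4}n]$, the factor $\ln(\ln^{3/4}n)=\tfrac34\ln\ln n$ should produce $\exp(-\tfrac34\ln^{3/4}n\cdot\ln\ln n(1+o(1)))$ rather than a coefficient of $\tfrac14$; this is immaterial since all you use is that the exponent is $o(\ln n)$, so the conclusion $\Pr[\cdot]\gg 1/n$ stands. (The analogous constant $\tfrac14$ in your Property~3 estimate, coming from $\ln b_0=\tfrac14\ln\ln n$, is correct.)
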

\begin{proof}
Property 1 follows immediately from the Chernoff inequality.  $\frac{1}{2}\sum_{i=1}^{n}d_i= |E(G)|$ is the sum of Bernoulli random variables, which is concentrated around its mean, $\frac{cn}{2}$. It is well known \cite{BolBook} that the maximum degree of $G_{n,p}, p=c/n$ is $\Theta\of{\frac{\ln n}{\ln\ln n}}$ with probability $1-o(1)$, so Property 2 holds. Note that the set $V_0$ is the same as the set on the left hand side of property 3. We have
\[\E\sqbs{\abs{V_0}}=n\Pr{\Bin\of{n-1,\frac{c}{n}} \ge \ln^{1/4}n} \le n\exp\set{-\ln^{1/4}n}\] by Chernoff's inequality (see, \emph{e.g.}, Corollary 2.4 in \cite{JLR}).  Consequently, Markov's inequality yields
\[\Pr{\abs{V_0} >n\ln n\exp\set{-\ln^{1/4}n} } =o(1).\]
\end{proof}

\begin{lemma}\label{HisForest} W.h.p. $H$ is a forest. 
\end{lemma}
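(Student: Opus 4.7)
The plan is to show that $E[\#\text{cycles in }H]=o(1)$, from which $H$ is a forest with high probability by Markov's inequality. By Lemma~\ref{Gtypical} I condition on the degree sequence of $G$ being typical, an event of probability $1-o(1)$. Under typicality we obtain the deterministic bounds $|V_0|\le n\ln n\exp(-\ln^{1/4}n)$ and $\Delta(G)\le\ln n$, and hence
\[
|V_2|\;\le\;|V_0|(1+\Delta+\Delta^{2})\;\le\;K:=3n\ln^3 n\exp(-\ln^{1/4}n).
\]
The key quantitative input is that $\ln^{1/4}n$ eventually dominates any polylogarithmic factor, so $K\Delta^{2}/n=O(\ln^5 n\cdot\exp(-\ln^{1/4}n))=o(1)$.

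I bound $E[\#\text{cycles in }H]$ by a union bound over \emph{witness structures}: for each potential $\ell$-cycle $C=v_1\dots v_\ell$ with $v_i\in V_2$, each $v_i$ admits a witness $u_i\in V_0$ at graph-distance at most $2$ via a path $P_i$ of length $r_i\in\{0,1,2\}$. Because we condition on typicality, $\deg(u_i)\ge b_0=\ln^{1/4}n$, and Chernoff's bound gives $\Pr[\deg(u)\ge b_0]\le\tbinom{n-1}{b_0}p^{b_0}\le\exp(-\Omega(b_0\log b_0))$. Summing over witness structures (cycle + paths + $b_0$-neighbor sets for each distinct $u_i$), the per-cycle-vertex contribution to the bound factors as
\[
\bigl(1+np+n^{2}p^{2}\bigr)\cdot\tbinom{n-1}{b_0}p^{b_0}\;=\;O(1)\cdot\exp(-\Omega(b_0\log b_0)),
\]
so that, writing $\beta:=c(1+c+c^{2})\exp(-\Omega(b_0\log b_0))=o(1)$, I obtain
\[
E[\#\ell\text{-cycles in }H]\;\le\;\frac{1}{2\ell}\,\beta^{\ell}.
\]
Summing over $\ell\ge 3$ gives $E[\#\text{cycles in }H]\le\beta^{3}/(1-\beta)=o(1)$, as desired.

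The main technical obstacle is bookkeeping when different cycle vertices share a witness $u_i$, or when edges of the witness paths overlap with each other or with the cycle: in that case the count of structures and the number of distinct edges change simultaneously. This is controlled either by restricting to structures with all witnesses and intermediate vertices distinct (valid as a union bound, since any cycle in $H$ admits at least one such structure once $|V_0|$ is large enough compared to $\ell$), or, for the edge-case of very long cycles or highly-repeated witnesses, by a direct configuration-model computation using that any $\ell$-cycle in the multigraph occurs with probability at most $(\Delta^{2}/m)^{\ell}$, which combined with $|V_2|\le K$ yields $E[\#\ell\text{-cycles in }G[V_2]]\le (K\Delta^{2}/m)^{\ell}/(2\ell)=o(1)^{\ell}$. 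Either route delivers the $o(1)$ bound on the expectation, completing the proof.
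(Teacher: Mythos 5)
Your overall route is genuinely different from the paper's: you attempt a direct first-moment computation in $G_{n,p}$ over ``witness structures'' (cycle $+$ short paths to $V_0$ $+$ a set of $b_0$ neighbors certifying membership in $V_0$), whereas the paper fixes a typical degree sequence, works in the configuration model, first controls component sizes of $H$ via properties {\bf P1}--{\bf P2}, and then exposes the edges of $H$ sequentially by BFS, bounding the probability that each exposed edge closes a cycle. Your approach would be more elementary if it worked (no configuration model, no component-size lemma), and your computation for the \emph{generic} structures (all witnesses, path-vertices and neighbor-set vertices distinct) is correct: the per-cycle-vertex factor $c(1+c+c^2)\binom{n-1}{b_0}p^{b_0}=\exp\{-\Omega(b_0\ln b_0)\}$ does give $\sum_{\ell\ge 3}\beta^\ell/(2\ell)=o(1)$.

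However, both of your proposed resolutions of the overlap problem --- which you rightly identify as the crux --- fail. Route A rests on the claim that every cycle in $H$ admits a witness structure with all witnesses distinct ``once $|V_0|$ is large enough compared to $\ell$.'' This is false: the witnesses must lie within distance $2$ of the specific cycle, and it is entirely possible (indeed typical in this sparse regime) that a short cycle in $H$ has exactly one vertex of $V_0$ within distance $2$ of all of its vertices, so that \emph{every} witness structure for that cycle has $u_1=\cdots=u_\ell$; the global size of $V_0$ is irrelevant, and restricting the union bound to all-distinct structures simply misses such cycles. Route B is also invalid: $V_2$ is a random set determined by the same edges that form the cycle, so you cannot treat it as a fixed set of size $K$ and union-bound over $\binom{K}{\ell}$ potential cycles with the unconditional per-cycle probability $(\Delta^2/m)^\ell$; the honest bound $\sum_C \mathbb{P}[C\text{ present and }V(C)\subseteq V_2]$ either requires the witness argument again, or, if one discards the event $V(C)\subseteq V_2$, sums over all $\sim n^\ell/(2\ell)$ cycles and gives $\Theta(\Delta^2)^\ell/(2\ell)$, which diverges. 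The gap is fixable: one classifies structures by their coincidence pattern and checks that each identification of witnesses removes one factor of $\binom{n}{b_0}p^{b_0}\ge (c/b_0)^{b_0}=\exp\{-O(b_0\ln b_0)\}$ but also increases the edge-excess $e-v$ of the structure by one, costing a factor of $n^{-1}\ll \exp\{-O(b_0\ln b_0)\}$ in the count --- so coincident structures contribute strictly less than generic ones --- but this bookkeeping (including paths sharing edges with each other, with the cycle, and with the neighbor sets) is precisely what your write-up omits, and it is the substance of the proof. Alternatively, the paper's component-size control plus sequential exposure sidesteps the issue entirely.
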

\begin{proof}

We will prove that w.h.p. $H$ does not have large components. Once that is established, the lemma will follow from a short calculation. We will use the following definition.
\begin{definition}
 For a graph $G$ and integer $i \ge 1$, let the graph $G^i$ have vertex set $V(G)$, and edge set 
\[E(G^i)=\left\{ \{u,v\}: d_G (u,v) \le i \right\}\]
\end{definition}

Our motivation for considering $G^i$ is as follows. Suppose $K$ is a connected component of $H$. Then the the set of vertices $V(K) \cap V_0$ induces a connected component in $G^5$. 
We claim that w.h.p. $G$ has the following properties:

\begin{enumerate}
\renewcommand{\theenumi}{{\bf P\arabic{enumi}}}
\item\label{noHighlyConnectedY} There does not exist $S\subseteq V_0$ such that $\abs{S} \ge s = \ln^{7/8}n$ and $S$ induces a connected component in $G^{5}$. 
\item\label{maxcomponent} The maximum component size in $H$ is at most $\ln^{17/8}n$.
\end{enumerate}

To establish \ref{noHighlyConnectedY} and \ref{maxcomponent}, fix a typical degree sequence $\tbf{d}=(d_1,d_2,\ldots,d_n)$. A random (multi-) graph with degree sequence $\tbf{d}$ is constructed using the configuration model as described in Bollob\'{a}s \cite{Bol80}. Let $m=(d_1+\cdots+d_n)/2$. We construct a random pairing $F$ of the points $W=\bigcup_{i=1}^{n}W_i,\,\abs{W_i}=d_i$ and interpret them as edges of a multi-graph on $[n]$. With a typical degree sequence, the probability that the resulting graph is simple is bounded away from 0 by a function of $c$ and not $n$ (see, \emph{e.g.} \cite{MW91}). We will prove that these three properties hold conditional on a specific degree sequence, and then sum over all degree sequences to get the result unconditionally.

To prove \ref{noHighlyConnectedY}, suppose that such an $S\subseteq V_0$ exists. Then we may assume that $\abs{S}=s$ and that there exists a tree $T$ in $G$ such that the leaves of the tree are a subset of $S$ and $\abs{T} \le 5s$. We may make this assumption on $\abs{T}$ since $G^5[S]$ is connected and each edge in $G^5$ corresponds to a path of length at most 5.  Then
\begin{align*}
\Pr{\neg \ref{noHighlyConnectedY}\mid \tbf{d}} &\le \sum_{t=s}^{5s}\abs{V_0}^sn^{t-s}t^{t-2}\binom{t\D}{2t}\prod\limits_{i=1}^{t-1}\frac{1}{2m - 2i+1}\\
&\le\sum\limits_{t=s}^{5s}n^t\ln^sn\exp\set{-s\ln^{1/4}n}t^{t-2}\bfrac{\D e}{2}^{2t}\bfrac{1}{\frac{2cn}{3}-10\ln^{7/8}n}^{t-1}\\
&\le n\cdot 5\ln^{7/8}n \cdot \exp\set{-\ln^{9/8}n + O(\ln^{7/8}n\ln\ln n) }\\
&=o(1).
\end{align*}
To see the first inequality here, note that $\abs{V_0}^sn^{t-s}$ is an upper bound on the number of ways to choose the vertices of $T$. $t^{t-2}$ is the number of trees on these vertices by Cayley's formula. The number of ways to choose configuration points corresponding to a specific tree is bounded above by $\binom{t\D}{2t}$ since there are at most $t\D$ configuration points and $2(t-1)$ half-edges in $T$.  The last product is the probability that those specific configuration points are paired off in the prescribed manner. 

To prove \ref{maxcomponent}, let $C$ be a component of $H$ and let $K=C\cap V_0$. Then $\abs{C} \le \of{\abs{K}\ln(n)}\ln^{1/4}n$ since $\abs{N(K)} \le \abs{K}\ln(n)$ and each of these vertices may have at most $\ln^{1/4}n$ neighbors. But by \ref{noHighlyConnectedY}, $\abs{K} \le \ln^{7/8}n$. So $\Pr{\neg \ref{maxcomponent}\mid \tbf{d}} = o(1)$.

Let \textbf{P} be the property that $H$ is a forest. Using these two facts we may prove \textbf{P} holds with high probability. We perform breadth first search to reveal $H$ in the following manner.  We reveal the pairs of $F$, one a time starting with pairs with at least one endpoint in $\bigcup_{i:\abs{W_i}\ge b_0}W_i$.  After this, the vertices of $N(V_0)$ have been revealed. We then reveal pairs of $F$ involving points corresponding to vertices of $N(V_0)$ which reveals $N^2(V_0)$.  Lastly, reveal pairs where both endpoints correspond to vertices from $N^2(V_0).$ At this point $H$ has been revealed. Each time an edge is revealed, there is some probability that it closes a cycle. This probability is bounded above by
\[\D\ln^{17/8}n\bfrac{1}{2m-o(n)}\]
since there are at most $\D\ln^{17/8}n$ configuration points corresponding to any particular component. Since $\abs{V_0} \le n\ln n\exp\set{-\ln^{1/4}n}$ and $\D \le \ln n$, we have that $\abs{V_2} \le n\ln^3n\exp\set{-\ln^{1/4}n}$. There are at most $\abs{V_2}\D$ exposures total, so the union bound gives
\begin{align*}
 \Pr{\neg \textbf{P}  \mid \tbf{d}} &\le \abs{V_2}\ln^{17/8}n\D^2\bfrac{1}{2m-o(n)}\\
	&\le \bfrac{3}{c}\ln^{57/8}n\exp\set{-\ln^{1/4}n}\\
	&=o(1).
\end{align*}
Now to remove the conditioning on $\tbf{d}$, we sum up over valid degree sequences.
\begin{align*}
 \Pr{\neg \textbf{P} } &\le \Pr{\tbf{d} \textrm{ not typical}} +  \sum_{\tbf{d}\textrm{ typical}}\Pr{\neg \textbf{P}  \mid \tbf{d}}\Pr{\tbf{d}}\\
 &= o(1)
\end{align*}
by Lemma \ref{Gtypical} and the fact that a weighted average of $o(1)$ terms is $o(1).$
\end{proof}

We now prove Theorem \ref{sparsemainthm} by showing how to color the graph.
\begin{proof} [Proof of Theorem \ref{sparsemainthm}]  By Lemma \ref{HisForest}, $H$ is a forest with probability $1-o(1)$. So
 by \eqref{treechromatic}, we may color $H$ with $\k_\D$ many colors where $\D$ is the maximum degree of $G$.  Give $H$ such a 2-tone coloring and then remove the colors on the vertices of $N^2(V_0).$  This leaves a proper 2-tone coloring on the vertices of $V_1$ (recall \eqref{Vkdefn}).  We will now show that the coloring on $V_1$ can be greedily extended to a proper 2-tone coloring of $G$ without using any new colors.  

Note that any pair of vertices in $V_1$ at distance 1 in $G$ receive disjoint pairs of colors. Any pair of vertices in $V_1$ at distance 2 in $G$ receive distinct pairs of colors. This was the reason for properly coloring $H$ and then uncoloring $N^2(Y).$ Not every proper coloring of $G[V_1]$ can be extended to a proper coloring of $V$ since there may be 2 vertices in $V_1$ at distance 2 in $G$ which are not distance 2 in $G[V_1].$

Let $v$ be an uncolored vertex. We must ensure that the label we assign to $v$ is disjoint from any current labels on $v$'s neighbors and is distinct from any current labels on vertices at distance 2 from $v$.  Let us count the number of labels that we are not allowed to put on $v$. Since $v\not\in Y$, $deg(v) < b_0$. So the number of labels forbidden by $N(v)$ is at most $2b_0\k_\D$. To see this note that at most $2b_0$ colors appear on vertices in $N(v)$ and each of these colors gives rise to $\k_\D -1$ labels which cannot be put on $v$.  Since $v\not \in N(Y)$, $\abs{N^2(v)} \le b_0^2$. So the number of labels forbidden by $N^2(v)$ is at most $b_0^2$, one for each label currently on a vertex of $N^2(v)$.  

So we have that the number of forbidden labels on $v$ is at most
\[2b_0\k_\D + b_0^2 < \binom{\k_\D}{2}.\] Hence there exists a pair of colors that we may use to label $v$. 
\end{proof}

\section{Results for $\t_{t}(G_{n,p})$, $t\ge 3$}\label{general}

\subsection{Dense case}

Our main theorem for dense random graphs and general $t$ is a direct generalization of the $t=2$ case.

\begin{maintheorem}\label{densemainthmgeneral}
 Let $p=p(n)$ satisfy $Cn^{-1/4}\ln^{9/4}n \le p < \eps < 1$ where $C$ is a sufficiently large constant and $\eps$ is any constant $<1$. Then w.h.p., 
\[\t_t(G_{n,p}) = (t+o(1))\chi(G_{n,p}).\]
\end{maintheorem}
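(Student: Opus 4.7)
The plan is to show that the lower bound $\t_t(G_{n,p}) \ge (t-o(1))\chi(G_{n,p})$ from Section 2 is tight by constructing $t$ ordinary proper colorings of $G_{n,p}$ on pairwise disjoint palettes, each using $(1+o(1))\chi(G_{n,p})$ colors, with the property that the associated partitions $\mathcal{P}_1,\ldots,\mathcal{P}_t$ pairwise respect one another. The label $f(v)$ is then the $t$-element set of colors assigned to $v$ across these partitions. Because w.h.p.\ the diameter of $G_{n,p}$ is $2$ in this range of $p$, the only distances occurring are $1$ and $2$, so the labels only need to be disjoint on adjacent vertices and intersect in at most one color on non-adjacent pairs. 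Disjointness on edges is automatic from the use of disjoint palettes and of proper colorings; the intersection-at-most-one property is exactly what pairwise respecting guarantees, since sharing two colors would force two distinct partitions to each place the pair into a common part.

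I would construct the partitions inductively. Set $\mathcal{P}_1$ to be the proper coloring produced by the proof of Theorem \ref{densemainthm}: at most $3n/k$ color classes, each of size at most $k$, using $(1+o(1))\chi(G_{n,p})$ colors. Given $\mathcal{P}_1,\ldots,\mathcal{P}_{i-1}$, build $\mathcal{P}_i$ on a fresh palette by iteratively stripping off large independent sets that respect all of $\mathcal{P}_1,\ldots,\mathcal{P}_{i-1}$ simultaneously, until fewer than $n/\ln^2 n$ vertices remain, and then greedily finishing the remainder. This requires a mild strengthening of Lemma \ref{denselem1} to families of $t-1$ partitions. The proof carries over essentially unchanged: the number of $s$-sets failing to respect any given partition is $O\!\of{\tfrac{s^3}{n}\binom{n}{s}}$, so the number failing to respect at least one of the $t-1$ partitions is at most $(t-1)$ times this, still negligible for constant $t$; meanwhile the sum $\sum_{S,S'}\E\sqbracs{X_SX_{S'}}$ does not involve the partitions at all. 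Hence Janson's inequality yields the same concentration bound \eqref{expineq}, and the outer union bound in the proof of Lemma \ref{denselem1} is inflated only by a factor of $\of{3n/k}^{(t-1)n}$ for the choice of the prior partitions, still dominated by the exponentially small failure probability.

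The greedy finishing step on the remainder $V'$ mirrors the $t=2$ case, except that the auxiliary graph $G'$ now incorporates same-part edges from each of the $t-1$ previously built partitions. For any $H\subseteq V'$, each partition contributes at most $O\of{|H|k}$ such edges, so the total number of extra edges is $O\of{t|H|k}=o\of{|H|\chi(G_{n,p})}$ for fixed $t$, and the coloring-number bound obtained via Lemma \ref{denselem2} survives. Each partition $\mathcal{P}_i$ therefore uses $(1+o(1))\chi(G_{n,p})$ colors, for a total of $t(1+o(1))\chi(G_{n,p})$ colors, matching the lower bound.

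The main obstacle is essentially bookkeeping: one must verify that none of the steps in the $t=2$ proof depends on the number of partitions in a non-constant way. Since $t$ is fixed, the overcount of non-respecting $s$-sets, the union bound over tuples of partitions, and the additional same-part edge count in $G'$ all inflate by at most a factor of $t$ and are therefore absorbed harmlessly into the existing error terms. Beyond this, no new ingredient is required beyond Lemmas \ref{denselem1} and \ref{denselem2}.
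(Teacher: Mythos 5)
Your proposal is correct and takes essentially the same route as the paper: the paper also constructs $t$ pairwise-respecting partitions inductively via a multi-partition analogue of Lemma~\ref{denselem1} (stated as Fact~\ref{manyverts}), applies Janson's inequality with a union bound over tuples of prior partitions, and finishes the small remainder greedily via Lemma~\ref{denselem2}. The only cosmetic difference is that the paper verifies $t$-tone properness directly from the pairwise-respecting property without invoking diameter~$2$, but the substance of the argument is identical.
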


\begin{proof}[Proof sketch for Theorem 3] We show that w.h.p. we can find $t$ partitions of $[n]$, $\mathcal{P}_1, \ldots, \mathcal{P}_t$, where each partition consists of $(1+o(1)) \chi(G)$ many independent sets in $G$, and each partition respects each other partition. Once we find $\mathcal{P}_1, \ldots ,\mathcal{P}_t$, we assign $t$ colors to each vertex $v$, according to which part of each partition $v$ is in. In other words, if $v \in P_{i,j} \in \mathcal{P}_i$ then one of the colors assigned to $v$ will be $c_{i,j}$.

This gives a proper $t$-tone coloring. Indeed, since each of the $t$ partitions respects all the others, any two vertices $u,v$ share at most one color, and if they do share one color then they are not adjacent because each partition consists of independent sets. 

To show that the $\mathcal{P}_1 \ldots \mathcal{P}_t$ exist w.h.p., we use induction on $t$. Suppose we are given $\mathcal{P}_1 \ldots \mathcal{P}_{t-1}$. We will construct $\mathcal{P}_t$ iteratively using Lemma \ref{denselem2} and the following fact. 

\begin{fact}\label{manyverts} W.h.p. for every set $U \subset [n]$ of size $|U| \ge \frac{n}{\ln^2 n}$, $G[U]$ has an independent set of size at least $s_0 = (1-o(1)) \a(G)$ that respects $\mathcal{P}_1 \ldots \mathcal{P}_{t-1}$.
 \end{fact}

Assuming this, we construct $\mathcal{P}_t$ by iteratively applying Fact \ref{manyverts}, removing independent sets until there are fewer than $\frac{n}{\ln^2 n}$ vertices remaining, at which point we apply Lemma \ref{denselem2} to greedily finish constructing the partition $\mathcal{P}_t$, as was done in Section 3.
\end{proof}

\begin{proof}[Proof sketch for Fact \ref{manyverts}]

This is analogous to Lemma \ref{denselem1}.  Janson's inequality gives an exponential bound on the probability that $G_{n,p}$ has no independent sets of size $k$ respecting some fixed partitions $\mathcal{P}_1 \ldots \mathcal{P}_{t-1}$. We let $B$ be the number of tuples $(\mathcal{P}_1, \ldots, \mathcal{P}_{t-1}, U)$ of partitions $\mathcal{P}_i$ and sets $U$ for which Fact \ref{manyverts} fails. We can then bound $\Mean{B}$ using a union bound, linearity of expectation, and Janson's inequality. 
 
\end{proof}

\subsection{Sparse Case}

Our precise result for $\t_2(G_{n,c/n})$ relied on the precise result that \[\t_2(T) =\ceil{\frac{\sqrt{8\D + 1} + 5}{2}}\]for any tree $T$. The $t$-tone chromatic number of trees is only known up to a constant factor. We will use the following result of Cranston, Kim and Kinnersly:
\begin{theorem}[Theorem 2 in \cite{CKK}]\label{treegenthm} For any integer $t\ge3$, there exist constants $c_1,c_2$ such that for any tree $T$, \[c_1\sqrt{\D(T)} \le \t_t(T) \le c_2\sqrt{\D(T)}.\] 
\end{theorem}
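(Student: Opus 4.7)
The plan is to prove the two bounds separately: the lower bound by a local counting argument at a maximum-degree vertex, and the upper bound by a constructive greedy coloring based on a near-Steiner family.

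For the lower bound I would focus on a single vertex $v$ of maximum degree $\D=\D(T)$ with neighbors $u_1,\ldots,u_{\D}$. In any $t$-tone $k$-coloring $f$, the label $f(v)$ is a $t$-subset of $[k]$, and since $d(v,u_i)=1$ each $f(u_i)$ is a $t$-subset of the $(k-t)$-element set $[k]\setminus f(v)$. Since $d(u_i,u_j)=2$ for $i\ne j$, the definition forces $|f(u_i)\cap f(u_j)|\le 1$, so no unordered pair of colors can occur in two different $f(u_i)$ (otherwise those two labels would share at least two colors). Counting pairs of colors across the $\D$ leaves gives $\D\binom{t}{2}\le\binom{k-t}{2}$, which yields $k\ge t+\sqrt{t(t-1)\D}-O(1)\ge c_1\sqrt{\D}$ for a suitable $c_1=c_1(t)>0$.

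For the upper bound the plan is to set $k=c_2\sqrt{\D}$ and build the coloring greedily. First I would produce a family $\mathcal{F}$ of $t$-subsets of $[k]$, pairwise intersecting in at most one element, with $|\mathcal{F}|\ge 3\D$; such a family exists once $c_2=c_2(t)$ is large enough, either from a near-Steiner system $S(2,t,k)$ or by a probabilistic deletion argument on a random collection of $t$-sets. Then root the tree and color in breadth-first order: when assigning labels to the children $w_1,\ldots,w_r$ of an already-colored vertex $v$ (with $r\le \D$), I would restrict to the sub-family of $\mathcal{F}$ consisting of $t$-subsets of $[k]\setminus f(v)$ and, for each $w_i$, pick a label from this sub-family that avoids the few labels already used on vertices at distance $\le 2$ from $w_i$ in the partially colored portion (namely the label of the parent of $v$, the labels of the other children of that parent, and the labels previously chosen for siblings $w_j$). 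The pairwise $1$-intersecting property of $\mathcal{F}$ handles the distance-$2$ constraints among the $w_i$'s automatically, so the only additional work is dodging those finitely many previously placed labels.

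The main obstacle will be the upper-bound reservoir estimate: after fixing $f(v)$ and discarding the $O(\D)$ already-used or otherwise forbidden labels, one must verify that the sub-family of $\mathcal{F}$ sitting inside $\binom{[k]\setminus f(v)}{t}$ still has positive size. Tuning $c_2=c_2(t)$ by a counting or design-existence argument so that this remains true uniformly across the greedy sweep is the quantitative crux; once the reservoir estimate holds, the breadth-first extension succeeds at every vertex and gives $\t_t(T)\le c_2\sqrt{\D}$.
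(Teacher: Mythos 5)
This statement is quoted from Cranston--Kim--Kinnersley \cite{CKK}; the paper itself gives no proof, so your argument has to stand on its own. Your lower bound does: the $\Delta$ labels on the neighbours of a maximum-degree vertex are pairwise $\le 1$-intersecting $t$-subsets of the $k-t$ colours avoiding $f(v)$, so no pair of colours repeats among them, $\Delta\binom{t}{2}\le\binom{k-t}{2}$, and $\tau_t(T)\ge c_1(t)\sqrt{\Delta}$ follows.

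The upper bound has a genuine gap, and for $t\ge 4$ the framework itself cannot succeed. If every label is drawn from a family $\mathcal{F}$ whose members pairwise intersect in at most one colour, then two vertices carrying the \emph{same} label intersect in $t$ colours, so every pair of vertices at distance $j\le t$ must receive \emph{distinct} labels --- not merely the pairs at distance $\le 2$ (or $3$) in your conflict list. When $t\ge 4$ this list omits, for instance, the up to $\Delta(\Delta-1)$ already-coloured cousins at distance $4$, and in general up to $\Theta(\Delta^{\lfloor t/2\rfloor})$ already-coloured vertices within distance $t$. This is not mere bookkeeping: in a complete $\Delta$-ary tree of depth $\lfloor t/2\rfloor$ the leaves are pairwise at distance $\le t$, so any colouring from such a family needs $|\mathcal{F}|\ge\Omega(\Delta^{\lfloor t/2\rfloor})$ distinct labels, while a pairwise $\le 1$-intersecting family of $t$-sets in $[k]$ has at most $\binom{k}{2}/\binom{t}{2}$ members; for $t\ge 4$ this forces $k=\Omega(\Delta)$ rather than $O(\sqrt{\Delta})$. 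The missing idea is to exploit that the constraint \emph{weakens} with distance: at distance $j$ you only need intersection $\le j-1$, so a coloured vertex $u$ at distance $j$ forbids only the $O_t(k^{t-j})$ $t$-sets meeting $f(u)$ in at least $j$ colours. Greedily colouring in BFS order over \emph{all} $t$-subsets of $[k]\setminus f(\mathrm{parent})$, the number of forbidden sets is $\sum_{j=2}^{t}O_t\bigl(\Delta^{\lfloor j/2\rfloor}k^{t-j}\bigr)$, and $\Delta^{\lfloor j/2\rfloor}k^{t-j}\ll k^t$ for every $j\in\{2,\dots,t\}$ precisely when $k\gg\sqrt{\Delta}$ (the maximum of $\Delta^{\lfloor j/2\rfloor/j}$ over this range is $\sqrt{\Delta}$, attained at even $j$). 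Taking $k=c_2(t)\sqrt{\Delta}$ with $c_2$ large then leaves an admissible label at every step. Your scheme can be patched for $t=3$, where the already-coloured conflict set really is of size $O(\Delta)$, but as written it does not prove the theorem for general $t\ge 3$.
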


This theorem allows us to prove our result for sparse graphs:
\begin{maintheorem}\label{sparsemaingen}
 Let $G=G_{n,p}$ where $p=c/n$ with $c$ constant and let $t\ge 3$ be an integer. If we let $\D=\D(G)$ represent the maximum degree, then there exist constants $c_1,c_2$ such that w.h.p.,
\[c_1\sqrt{\D} \le \t_t(G_{n,p})\le c_2\sqrt{\D}\]
\end{maintheorem}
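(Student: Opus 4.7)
\emph{Lower bound.} I would first prove the deterministic fact that $\t_t(G)\ge c_1(t)\sqrt{\D(G)}$ for every graph $G$, which then applies to $G_{n,c/n}$ since $\D=\Theta(\ln n/\ln\ln n)$ w.h.p. Fix a vertex $v$ of degree $\D$ and any proper $t$-tone $k$-coloring $f$. Every neighbor of $v$ has label in $\binom{[k]\setminus f(v)}{t}$, and any two neighbors are at $G$-distance $\le 2$, so their labels must be distinct and intersect in at most one color. A Fisher-type pair-counting bound (each unordered pair of colors in $[k]\setminus f(v)$ lies in at most one neighbor-label) yields $\D\le\binom{k-t}{2}/\binom{t}{2}$, and hence $k\ge c_1(t)\sqrt{\D}$.

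\emph{Upper bound: set-up and forest.} The plan for the upper bound is to mimic the proof of Theorem~\ref{sparsemainthm}. Keep $b_0:=\ln^{1/4}n$, $V_0:=\{v:\deg(v)\ge b_0\}$, and $V_k$ as in \eqref{Vkdefn}. Pick a constant $m=m(t)$ large enough that whenever $u,w\in V_{t-1}$ with $d_G(u,w)\le t$, every shortest $G$-path between them lies in $V_m$; the choice $m=2t$ works, since any intermediate vertex on such a path lies within $\lfloor t/2\rfloor+(t-1)$ of $V_0$. Set $H:=G[V_m]$. I would then show $H$ is a forest w.h.p.\ by rerunning the configuration-model proof of Lemma~\ref{HisForest} with $G^5$ replaced by $G^{2m+1}$ (two $V_0$-vertices in the same component of $H$ are $G^{2m}$-connected) and the polylogarithmic component-size constants enlarged in terms of $m$. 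None of the estimates break, since $|V_0|\le n\exp(-\ln^{1/4}n)\ln n$ and $\D\le\ln n$ still hold w.h.p.

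\emph{Coloring and greedy extension.} By Theorem~\ref{treegenthm}, $H$ admits a proper $t$-tone coloring with at most $c\sqrt{\D(H)}\le c\sqrt{\D}$ colors. Enlarge the palette to $k:=C\sqrt{\D}$ for a sufficiently large $C=C(t)$. By the choice of $m$, $d_H(u,w)=d_G(u,w)$ for every $u,w\in V_{t-1}$ with $d_G(u,w)\le t$, so the restriction of the $H$-coloring to $V_{t-1}$ is already a valid partial $t$-tone coloring of $G$. I then uncolor $V_m\setminus V_{t-1}$ and extend greedily. Any uncolored $v$ satisfies $d_G(v,V_0)\ge t$, so every vertex within distance $t-1$ of $v$ avoids $V_0$ and therefore has degree $<b_0$; consequently $|N^d(v)|\le b_0^d$ for all $d\le t$. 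A colored vertex at distance $d\le t$ from $v$ forbids at most $\sum_{i=d}^{t}\binom{t}{i}\binom{k-t}{t-i}=O(k^{t-d})$ labels, so the total number of forbidden $t$-subsets is bounded by a constant (depending on $t$) times $\sum_{d=1}^{t}b_0^d k^{t-d}$. Since $b_0/\sqrt{\D}\to 0$ (because $b_0=\ln^{1/4}n$ while $\sqrt{\D}=\Theta(\sqrt{\ln n/\ln\ln n})$), choosing $C$ large enough makes the forbidden count $o\!\left(\binom{k}{t}\right)$, so $v$ always has an admissible label. The hard part will be the careful redo of Lemma~\ref{HisForest} with $m$ and $2m+1$ in place of the small constants used for $t=2$; after that, the rest is a routine extension of the sparse $t=2$ argument.
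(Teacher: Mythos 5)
Your proposal is correct and follows essentially the same route as the paper: show $G[V_m]$ is a forest for a suitable constant $m$ via the configuration-model argument of Lemma~\ref{HisForest}, apply Theorem~\ref{treegenthm}, keep only the labels on $V_{t-1}$, and extend greedily using the bound $O(b_0\k^{t-1})\ll\binom{\k}{t}$ on forbidden labels (your $m=2t$ versus the paper's $2t-2$ is immaterial, since both exceed $\lfloor t/2\rfloor+t-1$). The one genuine addition is your explicit Fisher-type lower bound $\D\binom{t}{2}\le\binom{k-t}{2}$ at a maximum-degree vertex, which the paper leaves implicit in the lower bound of Theorem~\ref{treegenthm} applied to the star $K_{1,\D}$; your version is self-contained and equally valid.
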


\begin{proof}[Proof sketch for Theorem \ref{sparsemaingen}]
The proof of this theorem is a generalization of the proof of Theorem \ref{sparsemainthm}. The main step in that proof was to prove that $G[V_2]$ is a forest. To prove this result, we will prove that $H_t:=G[V_{2t-2}]$ is a forest. One may check that the proof of Lemma \ref{HisForest} works in the same way for $H_t$. For example, in property $\ref{noHighlyConnectedY}$, we must replace $G^{5}$ with $G^{4t-3}$. For the size of the maximum component, we will get $(\ln n)^{(4t+9)/8}$. Then in the calculation for $\Pr{\neg \tbf{P}\mid \tbf{d}}$, the exponent of $57/8$ will be replaced by a higher constant depending on $t$. However $\exp\set{-\ln^{1/4}n}$ goes to zero fast enough to handle any polylog factor.

Since $H_t$ is a forest, we may $t$-tone color it with $\k:=\t_t(H_t) =\Theta\of{\sqrt{\D}}$ many colors by Theorem \ref{treegenthm}. We then remove the labels except for those on $V_{t-1}$. This proper $t$-tone coloring on $G[V_{t-1}]$ may be extended to a proper coloring of $G$ in the same way. We took care to ensure that any two vertices of $V_{t-1}$ which are at distance at most $t$ in $G$ receive appropriate labels. We may now show that the remaining vertices may be greedily colored using no new colors. We do this in the same way, by ensuring that the maximum number of forbidden labels at any uncolored vertex is much smaller than the number of total labels. In this case, we see that the number of forbidden labels is bounded above by
\[\sum_{i=1}^{t}b_0^i\binom{t}{i}\binom{\k}{t-i} = O\of{b_0\cdot\k^{t-1}} \ll \binom{\k}{t}.\]
\end{proof}


\begin{thebibliography}{99}

 \bibitem{BP} A. Bickle and B. Phillips, \emph{$t$-Tone Colorings of Graphs}, submitted (2011).

 \bibitem{Bol80} B. Bollob\'as, \emph{A probabilistic proof of an asymptotic
formula for the number of labelled regular graphs}, European Journal
on Combinatorics, 1 (1980) 311--316.

  \bibitem{BolBook} B. Bollob\'as, \emph{Random Graphs. Second Edition.} Cambridge Studies in Advanced Mathematics (2001).

 \bibitem{BT79} B. Bollob\'as, A. Thomason, \emph{Set colourings of graphs}, Discrete Mathematics, 25 (1979) 21--26.

 \bibitem{CGH} G. Chartrand, D.P. Geller, S. Hedetniemi, \emph{A generalization of the chromatic number}, Mathematical Proceedings of the Cambridge Philosophical Society, 64 (1968), 265--271.

 \bibitem{CKK} D. Cranston, J. Kim, W. Kinnersly, \emph{New results in $t$-tone colorings of graphs}, submitted (2011). Preprint \url{http://arxiv.org/abs/1108.4751}

 \bibitem{Diestel} R. Diestel, \emph{Graph Theory. Fourth Edition.} Springer (2010).

 \bibitem{FGPS} N. Fonger, J. Goss, B. Phillips, C. Segroves, \emph{Math 6450: Final Report.} \url{http://homepages.wmich.edu/~zhang/finalReport2.pdf}

 
 \bibitem{JLR} S. Janson, T. \L uczak, A. Ruci\'nski, \emph{Random Graphs}. Wiley-Interscience series in Discrete Mathematics and Optimization (2000).


\bibitem{MW91} B. D. McKay and N. C. Wormald, Asymptotic enumeration by degree sequence of graphs with degrees $o(\sqrt{n})$, Combinatorica, 11 (1991) 369--382.
\end{thebibliography}
\end{document}